\newtheorem{thm}{Theorem}[section]
\newtheorem{cor}[thm]{Corollary}
\newtheorem{lem}[thm]{Lemma}
\newtheorem{prop}[thm]{Proposition}
\theoremstyle{definition}
\theoremstyle{remark}
\numberwithin{equation}{section}
\begin{document}
\title[Fourier transform and regularity of characteristic functions]
{Fourier transform and regularity  of characteristic functions}

\author{Hyerim Ko}
\author{Sanghyuk Lee}

\address{Department of Mathematical Sciences, Seoul National University, Seoul 151--747, Republic of Korea}
\email{shklee@snu.ac.kr} \email{kohr@snu.ac.kr}

\thanks{The authors were supported in part by NRF grant No.2009-0083521 (Republic of Korea).}
\subjclass[2010]{Primary 42B25; Secondary 42B15}



\begin{abstract} Let $E$ be a bounded domain in $\mathbb R^d$.
We study regularity property of $\chi_E$ and integrability of
$\widehat {\chi_E }$ when its boundary $\partial E$ satisfies some
conditions. At the critical case these properties are generally
known to fail. By making use of Lorentz and Lorentz-Sobolev spaces
we obtain the endpoint cases of the previous known results. Our
results are based on a refined version of Littlewood-Paley
inequality, which makes it possible to exploit cancellation
effectively.

\end{abstract}

\maketitle

\section{Introduction}
Let $E$ be a bounded measurable subset  in $ \mathbb R^d $. Sobolev
regularity property of $\chi_E$ and $L^p$-integrability of $\widehat
{\chi_E }$ have long been of interest in connection with various
problems and studied extensively by many authors \cite{Herz, Sickel, Triebel2}.
The sharp range of regularity and integrability are relatively well
known and these properties generally fail at the critical exponent.
In extended functions spaces validity of such properties at the
critical exponent is not clearly understood . In this short note
investigate this issue with Lorentz and Lorentz-Sobolev spaces.

\subsubsection*{Integrability of $\widehat {\chi_E}$}
If $2\le p\le \infty$, by the Hausdorff-Young inequality it follows
that $\widehat{\chi_E}\in L^p$. This holds without any dependence on
the geometric structure of $ E $. However, for $ p < 2 $, it becomes
no longer trivial to determine $ L^p$-boundedness of $ \widehat
{\chi_E} $ and the geometric information of $E$ comes into
play, especially the geometry of the boundary of $E$.

In order to describe  the boundary,  we  set
\[\big(
\partial E \big)_\delta=\{x: \text{ dist }(x,\partial E)<\delta\}\]
and  consider the condition that, for $0<\gamma\le d$,
\begin{equation}\label{gamma}
|\big( \partial E \big)_\delta | \lesssim \delta^{d-\gamma}.
\end{equation}
This  is satisfied if $\partial E$ is a $\gamma$ set (see
\cite[Definition 3.1]{Triebel} for definition of $\gamma$-set). As
remarked in (p.5-6 in \cite{Triebel}), the Minkowski content is
equivalent to the Hausdorff measure for $\gamma$-set.

$L^p$-integrability of $\widehat {\chi_E}$ and  Sobolev regularity
of $\chi_E$ are closely related. In fact,  the first on some range
can be deduced from the latter.
We denote by $L_s^q(\mathbb R^d)$, for $0<s<\infty$, the Bessel potential spaces
consisting of all tempered distribution $f\in \mathcal S'(\mathbb
R^d)$ with the norm
$$
\| f \|_{L_s^q} = \Big\| \Big( \Big( 1+ |\xi|^2 \Big)^{\frac
s 2} \widehat{f} (\xi) \Big)^\vee  \Big\|_q,
$$
where $ ^{\vee} $ denotes the inverse Fourier transform
and by $\Lambda_s^{q,r}(\mathbb R^d)$
the Nikol'skij-Besov spaces endowed with the norm
$$
\| f \|_{\Lambda_s^{q,r}} = \| f \|_q  +  \Big( \int_{\mathbb R^d}
\frac{ \left\Vert f(x+t) - f(x) \right\Vert_q^r }{ |t|^{d+rs} } \,
 dt \Big)^{1/r}
$$
for $ 0<s<1 $ and  $q\in [1,\infty]$ and $r\in [1,\infty)$. Then we
see that $\Lambda_s^{q,q} = W_s^q $ where $W_s^q $ denotes the
fractional Sobolev spaces.
Then we have the following characterization of $ W_s^q$ due to Sickel
\cite[Proposition 3.6]{Sickel}. The converse direction is also true
if $E$ is a quasiball \cite[Theorem 1.3]{Faraco}.

\begin{thm}{} \label{sickel}  Let $ 0<s<1$ and $ E $ be a measurable  subset of $\mathbb R^d $
with $|E|<\infty$. Suppose
$$
\int_0^1  \delta^{-qs}\big|\big(  \partial E \big)_\delta \big|
\frac { d\delta}{\delta} <\infty,
$$
then $\chi_E \in W_s^q(\mathbb R^d)$ for $ 1 \le q < \infty $.
\end{thm}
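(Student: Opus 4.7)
The plan is to exploit the identity $\Lambda_s^{q,q}=W_s^q$ recorded just before the theorem and bound $\|\chi_E\|_{\Lambda_s^{q,q}}$ directly. Since $|E|<\infty$, the $L^q$ part of the norm equals $|E|^{1/q}<\infty$, so everything reduces to estimating
$$
I:=\int_{\mathbb R^d}\frac{\|\chi_E(\cdot+t)-\chi_E(\cdot)\|_q^q}{|t|^{d+qs}}\,dt.
$$

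The single geometric input I would use is the following: the difference $\chi_E(x+t)-\chi_E(x)$ takes values in $\{-1,0,1\}$, and if it is nonzero, then the segment joining $x$ to $x+t$ must cross $\partial E$, so $\dist(x,\partial E)\le|t|$, i.e.\ $x\in(\partial E)_{|t|}$. Raising to the $q$-th power and integrating in $x$ yields the pointwise-in-$t$ bound
$$
\|\chi_E(\cdot+t)-\chi_E(\cdot)\|_q^q\le\bigl|(\partial E)_{|t|}\bigr|.
$$

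I then split the integral at $|t|=1$. On $|t|\ge 1$ the crude estimate $\|\chi_E(\cdot+t)-\chi_E(\cdot)\|_q^q\le 2^q|E|$ combined with $\int_{|t|\ge 1}|t|^{-d-qs}\,dt<\infty$ gives a finite contribution. On $|t|\le 1$ I insert the boundary bound and pass to polar coordinates in $t$:
$$
\int_{|t|\le 1}\frac{|(\partial E)_{|t|}|}{|t|^{d+qs}}\,dt = c_d\int_0^1\delta^{-qs}\bigl|(\partial E)_\delta\bigr|\,\frac{d\delta}{\delta},
$$
which is finite by the hypothesis. There is no genuinely hard step; the one substantive point is the geometric observation of the previous paragraph, which converts the translation-difference of $\chi_E$ into the measure of a boundary neighborhood, matching the assumption precisely. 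Summing the two pieces proves $\chi_E\in\Lambda_s^{q,q}=W_s^q$.
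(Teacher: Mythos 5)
Your proof is correct and is the standard argument for this estimate. The paper does not give its own proof here---Theorem \ref{sickel} is quoted from \cite[Proposition 3.6]{Sickel}---but the route you take (rewrite $W_s^q=\Lambda_s^{q,q}$, observe that $\chi_E(x+t)\neq\chi_E(x)$ forces $\dist(x,\partial E)\le|t|$, hence $\|\chi_E(\cdot+t)-\chi_E(\cdot)\|_q^q\le|\{\,\dist(\cdot,\partial E)\le|t|\,\}|$, then split at $|t|=1$ and pass to polar coordinates) is exactly the proof one expects in the reference and the one implicit in the paper's surrounding discussion. The only cosmetic point is that $\dist(x,\partial E)\le|t|$ gives $x$ in the \emph{closed} $|t|$-neighborhood rather than the open set $(\partial E)_{|t|}$; this is harmless since one may replace $|t|$ by $2|t|$ (absorbing the constant after the change of variables) or note that $\delta\mapsto|(\partial E)_\delta|$, being monotone, is continuous for a.e.\ $\delta$.
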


If $\frac{2(d-\alpha)}{d}<p\le2$,  by H\"older's inequality and
Plancherel's theorem  we have
\[\|\widehat {\chi_{E}}\|_p\lesssim
 \|\widehat {\chi_{E}}(1+|\cdot|^2)^\frac{\alpha}{2p}\|_2
\sim  \|\chi_E\|_{W_{{\alpha}/p}^2(\mathbb R^d)}.\] We here use the
 fact that $W_s^2(\mathbb R^d)$ and $L_s^2(\mathbb R^d)$ are
equivalent for $0<s<1$ (see \cite{Stein}). Now, by Theorem
\ref{sickel}, $ \chi_E \in W_{{\alpha}/p}^2(\mathbb R^d) $ if $ 0 <
\frac {\alpha}p < \frac{d-\gamma}2$. Hence,
 $ \widehat { \chi_E} \in L^p(\mathbb R^d) $ for $ p > \frac {2d} {2d-\gamma} $  whenever
 the condition \eqref{gamma} holds and $\gamma \ge d-2$ (see \cite{Lebedev}
 for a slightly different argument).

  Especially for a bounded domain
with $ C^1$-boundary, this (with $\gamma = d-1$ in \eqref{gamma})
recovers the classical result due to Herz \cite{Herz} who showed
that $ \widehat { \chi_E} \in L^p(\mathbb R^d) $ for $ p > \frac
{2d} {d+1} $ under the assumption that $ E $ is compact and convex
with  smoothness condition.

\smallskip

If we assume that $E$ is a bounded domain, an improved
characterization is possible in terms of $\big(  \partial E
\big)_\delta $. The following is our first result.

\begin{prop}\label{fchar} Let $1\le p\le 2$ and $E$ be a bounded domain. Then,
\[\big\| \widehat{\chi_E} \big\|_p  \lesssim |E| + \Big(
\int_0^1 \delta^{-d(1-\frac p 2)} |(\partial E)_{\delta} |^{\frac
p2} \,\frac{d\delta}\delta \Big)^{1/p}.\]
\end{prop}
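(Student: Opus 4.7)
The plan is to decompose $\widehat{\chi_E}$ via a Littlewood--Paley decomposition, control each high-frequency dyadic piece by combining Plancherel with the geometric identity $\|\chi_E-\chi_{E+y}\|_2^2=|E\Delta(E+y)|\le 2|(\partial E)_{|y|}|$, and then sum over scales to recover the integral on the right-hand side. Choose $\Phi_0\in C_c^\infty(\mathbb R^d)$ with $\Phi_0\equiv 1$ on $\{|\xi|\le 1\}$ and $\supp \Phi_0\subset\{|\xi|\le 2\}$, and for $k\ge 1$ set $\psi_k(\xi):=\Phi_0(2^{-k}\xi)-\Phi_0(2^{-k+1}\xi)$, so that $\Phi_0+\sum_{k\ge 1}\psi_k\equiv 1$, $\supp \psi_k\subset\{|\xi|\sim 2^k\}$, and at any $\xi$ at most two of these cutoffs are nonzero. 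Consequently
\[
\|\widehat{\chi_E}\|_p^p \lesssim \|\Phi_0\widehat{\chi_E}\|_p^p+\sum_{k\ge 1}\|\psi_k\widehat{\chi_E}\|_p^p ,
\]
and the low-frequency term is handled by $\|\Phi_0\widehat{\chi_E}\|_p\le \|\Phi_0\|_p\,\|\widehat{\chi_E}\|_\infty\lesssim |E|$, using $|\widehat{\chi_E}(\xi)|\le |E|$.

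\textbf{Dyadic $L^2$ estimate with cancellation.} The crux is $\|\psi_k\widehat{\chi_E}\|_2^2 \lesssim |(\partial E)_{2^{-k}}|$ for $k\ge 1$. Pick a nonnegative radial $\eta\in C_c^\infty(B_1)$; by rotation invariance, continuity and positivity, $c_0:=\inf_{1/2\le|v|\le 2}\int \eta(u)\,|1-e^{-iu\cdot v}|^2\,du>0$. Rescaling $u=2^ky$ yields, for $|\xi|\sim 2^k$,
\[
2^{kd}\int \eta(2^ky)\,|1-e^{-iy\cdot\xi}|^2\,dy\ge c_0.
\]
Multiplying by $|\psi_k(\xi)\widehat{\chi_E}(\xi)|^2$, integrating in $\xi$, using $|\psi_k|\le 1$, swapping the order, and applying Plancherel to the inner integral gives $\int |1-e^{-iy\cdot\xi}|^2|\widehat{\chi_E}|^2\,d\xi=\|\chi_E-\chi_{E+y}\|_2^2\le 2|(\partial E)_{|y|}|$, hence
\[
\|\psi_k\widehat{\chi_E}\|_2^2 \lesssim 2^{kd}\int \eta(2^ky)\,|(\partial E)_{|y|}|\,dy \lesssim |(\partial E)_{2^{-k}}|,
\]
the last step because $\eta(2^k\cdot)$ is supported in $B_{2^{-k}}$ and $2^{kd}\int \eta(2^ky)\,dy=\|\eta\|_1$.

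\textbf{Summation and main obstacle.} By H\"older on the support of $\psi_k$ (of measure $\lesssim 2^{kd}$),
\[
\|\psi_k\widehat{\chi_E}\|_p^p\le |\supp \psi_k|^{1-p/2}\,\|\psi_k\widehat{\chi_E}\|_2^p\lesssim 2^{kd(1-p/2)}\,|(\partial E)_{2^{-k}}|^{p/2}.
\]
Because $\delta\mapsto|(\partial E)_\delta|$ is nondecreasing, a Riemann-sum comparison on the intervals $[2^{-k},2^{-k+1}]$ bounds $\sum_{k\ge 1}2^{kd(1-p/2)}|(\partial E)_{2^{-k}}|^{p/2}$ by a constant multiple of $\int_0^1 \delta^{-d(1-p/2)}|(\partial E)_\delta|^{p/2}\,d\delta/\delta$. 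Combining this with the low-frequency bound and using $(a+b)^{1/p}\le a^{1/p}+b^{1/p}$ yields the claimed inequality. The main technical point is producing the $L^2$ bound without any spurious tail term: the averaging bump $\eta$ must simultaneously detect the frequency-side oscillation of $\psi_k$ at scale $2^k$ and remain localized on the physical side at scale $2^{-k}$, which is arranged exactly by taking $\eta$ compactly supported in a fixed ball and then rescaling by $2^k$.
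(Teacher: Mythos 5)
Your proof is correct, and it takes a genuinely different route to the crucial dyadic $L^2$ estimate. The paper constructs (Lemma \ref{keylemma}) a special $\phi$ whose \emph{inverse} Fourier transform $\phi^\vee$ is compactly supported and has mean zero; the cancellation is then a physical-space observation (Lemma \ref{packet}): since $\phi^\vee(2^k\cdot)*\chi_E$ vanishes both deep inside $E$ (by $\int\phi^\vee=0$) and far outside $E$ (by compact support), it is supported in $(\partial E)_{2^{-k}}$, and then one simply bounds the $L^p$ norm by the $L^\infty$ norm times $|(\partial E)_{2^{-k}}|^{1/p}$, with no averaging device needed. You instead take a standard Littlewood--Paley cutoff $\psi_k$ with compactly supported \emph{multiplier}, and obtain the $L^2$ bound from frequency-side positivity: average $|1-e^{-iy\cdot\xi}|^2$ against a bump $\eta(2^k\cdot)$ to detect that $\psi_k$ lives at frequency $\sim 2^k$, then pass by Plancherel to $\|\chi_E-\chi_{E+y}\|_2^2=|E\Delta(E+y)|\lesssim|(\partial E)_{|y|}|$. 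This is essentially the $\Lambda_s^{2,2}$/modulus-of-continuity mechanism behind Theorem \ref{sickel}, rather than the mean-zero-kernel mechanism of Lemma \ref{packet}. Your version is self-contained and avoids building the special $\phi$, which is a genuine simplification if one only wants Proposition \ref{fchar}; the paper's construction pays off elsewhere (Theorems \ref{maintheorem1} and \ref{maintheorem2} lean on the same $\phi$ and the physical-space support property). One cosmetic point: the inclusion $E\Delta(E+y)\subset\{\operatorname{dist}(\cdot,\partial E)\le|y|\}$ uses the closed tube, so strictly one should write $|E\Delta(E+y)|\le 2|(\partial E)_{2|y|}|$ (or $|(\partial E)_{(1+\varepsilon)|y|}|$); this shift by a factor of two in the dyadic scale is harmless after the Riemann-sum comparison. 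The summation step, low-frequency bound by $|E|$, and the $(a+b)^{1/p}\le a^{1/p}+b^{1/p}$ wrap-up all match the paper.
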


However, at the critical $ p = \frac{2d}{2d-\gamma}$, the
condition \eqref{gamma} doesn't generally imply $\widehat{\chi_E}\in
L^p(\mathbb R^d)$. For example, if $E$ is a ball  $B$, \eqref{gamma}
is satisfied with $\gamma=d-1$, but $\widehat{\chi_B}\not\in
L^\frac{2d}{d+1}$ because, for $|\xi| \gg 1$,
\begin{equation}\label{bessel} \left| \widehat{ \chi_B} (\xi) \right| \geq
C |\xi|^{-\frac{d+1} 2} |\sin(2\pi |\xi|)| -C'|\xi|^{-\frac{d+3}2}.
\end{equation}
Lebedev \cite{Lebedev} showed that $\widehat{\chi_E}\notin
L^p(\mathbb R^d)$ for $ p \leq \frac {2d}{d+1} $ if  $E$ has $C^2$-
boundary.

Though $\widehat{\chi_E}$ generally fails to be in
$L^\frac{2d}{2d-\gamma}(\mathbb R^d)$ under the assumption
\eqref{gamma}, from the above example it seems natural to expect
that $\widehat{\chi_E}$ is contained in the weaker
$L^{\frac{2d}{2d-\gamma},\infty}$. Here $L^{p,\infty}$ denotes the
weak $L^p$ space. At the critical $ p = \frac {2d} {2d-\gamma}$, we
have the following.

\begin{thm}\label{maintheorem1} Let $ E $ be a bounded domain in $ \mathbb R^d $.
Assume that, for some $ 0< \gamma < d $, \eqref{gamma} holds
for $ 0< \delta \ll 1$. Then $\widehat{ \chi_E }\in
L^{\frac{2d}{2d-\gamma},\infty}(\mathbb R^d)$.
\end{thm}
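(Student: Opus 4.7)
The plan is to bound the distribution function of $\widehat{\chi_E}$ directly, using a dyadic Littlewood-Paley decomposition in frequency combined with Plancherel on each annulus. Fix $\beta \in C_c^\infty(\mathbb R^d)$ supported in $\{1/2 \le |\xi| \le 2\}$ with $\beta \equiv 1$ on $\{3/4 \le |\xi| \le 3/2\}$, set $\beta_k(\xi) = \beta(2^{-k}\xi)$, and let $P_k$ denote the associated Fourier multiplier. With $A_k = \{2^{k-1} < |\xi| < 2^{k+1}\}$, I would partition $\{|\widehat{\chi_E}| > \lambda\}$ into its traces on the $A_k$, estimate each piece in two competing ways, and optimize over the dyadic scale.

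The heart of the argument is the estimate
\[
\|P_k \chi_E\|_2^2 \lesssim |(\partial E)_{2^{-k}}| \lesssim 2^{-k(d-\gamma)},\qquad k \ge k_0,
\]
the second inequality coming from \eqref{gamma}. The first is a quantitative cancellation statement: since $\beta_k(0) = 0$, the kernel $\check\beta_k$ has zero mean, so
\[
P_k \chi_E(x) = \int \check\beta_k(x-y)\bigl[\chi_E(y) - \chi_E(x)\bigr]\,dy,
\]
and combining the rapid decay of $\check\beta_k$ with the fact that $\chi_E$ is locally constant at scale $2^{-k}$ outside $(\partial E)_{2^{-k}}$ yields $|P_k\chi_E(x)| \lesssim (1 + 2^k \dist(x,\partial E))^{-N}$ for any $N$; integrating then produces the claim (modulo a harmless $O(2^{-kN})$ remainder from the compact support of $E$). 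Now, by Plancherel and Chebyshev,
\[
|A_k \cap \{|\widehat{\chi_E}|>\lambda\}| \le \lambda^{-2}\int_{A_k}|\widehat{\chi_E}|^2\,d\xi \lesssim \lambda^{-2}\,2^{-k(d-\gamma)},
\]
while trivially $|A_k \cap \{|\widehat{\chi_E}|>\lambda\}| \le |A_k| \lesssim 2^{kd}$. Taking the minimum and summing over $k$, the crossover sits at $2^{k_*}\sim \lambda^{-2/(2d-\gamma)}$, and both geometric tails collapse to $\lambda^{-2d/(2d-\gamma)}$, which is the desired weak-type bound. The finitely many low-frequency levels $k < k_0$ can be absorbed using $\|\widehat{\chi_E}\|_2 = |E|^{1/2}$ together with the bounded volume of the low-frequency region.

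The main obstacle is the $L^2$ cancellation estimate $\|P_k \chi_E\|_2^2 \lesssim |(\partial E)_{2^{-k}}|$: this is where the paper's \emph{refined Littlewood-Paley inequality} does its work, turning the vanishing mean of $\check\beta_k$ into precise geometric control by the boundary neighborhood. The decay of $P_k\chi_E$ away from $\partial E$ must be quantitative enough that its $L^2$-integral on the complement of $(\partial E)_{2^{-k}}$ does not spoil the endpoint; once this cancellation is cleanly in hand, the dyadic optimization above is routine, with both halves of the $\min$ meeting at exactly the critical exponent $p = 2d/(2d-\gamma)$ by design.
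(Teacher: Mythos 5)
Your overall strategy is correct and lands on exactly the same dyadic optimization as the paper: split the level set of $\widehat{\chi_E}$ at a scale $2^{k_*}\sim\lambda^{-2/(2d-\gamma)}$, use an $L^2$/Chebyshev bound at high frequency, use a cruder bound at low frequency, and the two tails meet at $\lambda^{-2d/(2d-\gamma)}$. Your low-frequency step is in fact slightly simpler than the paper's: you use only the trivial bound $|A_k|\lesssim 2^{kd}$, whereas the paper derives an $L^1$ estimate $\|\phi^2(2^{-k}\cdot)\widehat{\chi_E}\|_1\lesssim 2^{k\gamma/2}$ via Cauchy--Schwarz from its $L^2$ cancellation lemma and then applies Chebyshev.

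The genuine difference, and the place where your write-up slips, is the cancellation lemma itself. The paper does \emph{not} use a classical Littlewood--Paley decomposition with $\beta$ compactly supported in \emph{frequency}. Instead (Lemma~\ref{keylemma}) it constructs $\phi$ with $\phi^\vee$ compactly supported in the \emph{spatial} ball $B_1(0)$ and $\int\phi^\vee=0$. The payoff is that $P_k\chi_E$ is then \emph{literally supported} in $(\partial E)_{2^{-k+1}}$: away from the boundary, $\chi_E$ is constant on the $2^{-k}$-ball where $\check{\phi}_k$ lives and the zero-mean kills the integral exactly. Hence Lemma~\ref{packet} gives $\|P_k\chi_E\|_p^p\lesssim|(\partial E)_{2^{-k+1}}|$ with no geometric hypothesis whatsoever. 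Your $\check\beta_k$ is merely Schwartz, not compactly supported, so $P_k\chi_E$ has tails. Your claim that these tails contribute only a ``harmless $O(2^{-kN})$ remainder from the compact support of $E$'' is not correct: the far field (where $\dist(x,\partial E)$ exceeds the diameter of $E$) does contribute $O(2^{-kN})$, but the \emph{near} tails in the region $2^{-k}\le\dist(x,\partial E)\le\operatorname{diam}(E)$ contribute $\sum_{j\ge1}2^{-2Nj}|(\partial E)_{2^{j-k}}|$, which without any hypothesis on $\partial E$ is $O(1)$, not $O(2^{-kN})$. So the unconditional estimate $\|P_k\chi_E\|_2^2\lesssim|(\partial E)_{2^{-k}}|$ that you state as the first step \emph{fails} for your operators; you can only conclude it after inserting \eqref{gamma}, which yields $\|P_k\chi_E\|_2^2\lesssim 2^{-k(d-\gamma)}$ directly.

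For Theorem~\ref{maintheorem1} this is survivable, because you only ever use the weaker bound $\lesssim 2^{-k(d-\gamma)}$; so the theorem is recoverable along your route provided you repair the presentation and feed \eqref{gamma} into the tail sum rather than into a nonexistent intermediate inequality. But the distinction matters elsewhere in the paper: Proposition~\ref{fchar} controls $\|\widehat{\chi_E}\|_p$ by the exact quantity $\int_0^1\delta^{-d(1-p/2)}|(\partial E)_\delta|^{p/2}\,d\delta/\delta$ \emph{without} assuming \eqref{gamma}, and there the tail-free support property from Lemma~\ref{packet} is essential. That is precisely why the paper builds the refined $\phi$ rather than reaching for the standard annular decomposition.
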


When $\gamma=d-1$, Theorem \ref{maintheorem1} is optimal by
\eqref{bessel} in that $\widehat{\chi_B}\notin L^{p,\infty}(\mathbb
R^d)$ for $ p <\frac {2d}{d+1}$. For $\gamma$ other than  $d-1$ the
same seems to be true but we are not able to construct an example at
this moment. In particular, if $ E $ has Lipschitz boundary, then $
| \left(
\partial E \right) _\delta | \lesssim \delta $. Hence we get the following.
\begin{cor} \label{C1weak}
Let $ E $ be a bounded domain in $ \mathbb R^d $ with Lipschitz
boundary.  Then $ \widehat{ \chi_E }\in L^{p,\infty} (\mathbb R^d) $
for $p=\frac{2d}{d+1} $.
\end{cor}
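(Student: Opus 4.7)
The plan is short: Corollary \ref{C1weak} is a direct specialization of Theorem \ref{maintheorem1}, and essentially all the work is done once one verifies condition \eqref{gamma} with $\gamma = d-1$ for a bounded Lipschitz domain. The strategy therefore has only two steps.

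First, I would show that for a bounded Lipschitz domain $E$ one has $|(\partial E)_\delta| \lesssim \delta$ for all $0 < \delta \ll 1$. By compactness of $\partial E$ and the definition of a Lipschitz domain, $\partial E$ is covered by finitely many open sets $U_1, \dots, U_N$ such that, after a rigid motion depending on $j$, $\partial E \cap U_j$ is the graph $\{(x', \varphi_j(x')) : x' \in B_j\}$ of a Lipschitz function $\varphi_j \colon B_j \to \mathbb R$ on a bounded set $B_j \subset \mathbb R^{d-1}$. For small $\delta$, the $\delta$-neighborhood of such a graph is contained in the slab $\{(x', x_d) : x' \in B_j', |x_d - \varphi_j(x')| \leq C_j \delta\}$ for a slightly enlarged $B_j'$, whose Lebesgue measure is bounded by a constant (depending only on $B_j'$ and the Lipschitz constant of $\varphi_j$) times $\delta$. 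Summing over the finitely many charts gives $|(\partial E)_\delta| \lesssim \delta$ uniformly in small $\delta$.

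Second, with this estimate, condition \eqref{gamma} holds with $\gamma = d-1$, and
\[
\frac{2d}{2d-\gamma} = \frac{2d}{d+1}.
\]
Applying Theorem \ref{maintheorem1} with this value of $\gamma$ yields $\widehat{\chi_E} \in L^{\frac{2d}{d+1},\infty}(\mathbb R^d)$, which is the statement of the corollary.

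There is no real obstacle in this argument; the substantive content lies entirely in Theorem \ref{maintheorem1}. The only point requiring any care is the geometric lemma in Step 1, and even there the Lipschitz graph description makes the estimate $|(\partial E)_\delta| \lesssim \delta$ routine.
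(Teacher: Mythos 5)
Your proposal is correct and follows exactly the route the paper takes: the paper simply observes that Lipschitz boundary gives $|(\partial E)_\delta|\lesssim\delta$ and then invokes Theorem~\ref{maintheorem1} with $\gamma=d-1$. Your filling in of the chart-by-chart verification of $|(\partial E)_\delta|\lesssim\delta$ is a harmless elaboration of the same argument.
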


\subsubsection*{Regularity property of $\chi_E$}
By Theorem \ref{sickel} $ \chi_E \in W_{s}^q (\mathbb R^d)$
if $s< \frac{d-\gamma}q$ but  $\chi_E$ generally fails to be in
$W_{s}^q (\mathbb R^d)$ at the critical exponent $s =(d-\gamma)/ q$.
In \cite[Theorem 3]{Triebel2} it is shown that,  for $ 1 \le d-1 \le
\gamma < d $ and for $ 1\leq q <\infty$, there is a bounded
star-like domain $E$ such that the boundary $\partial E$ is a
$\gamma$-set and $\chi_E\not\in W_{(d-\gamma)/q}^q (\mathbb R^d)
$.
Another result is that if $E$ is a $K$-quasiball such as Koch snowflake whose boundary has
nonzero $\gamma$-dimensional lower Minkowski content,
 then $\chi_E \notin W_{(d-\gamma)/q}^q
(\mathbb R^d)$ for $1 \le q < \infty$  (Theorem 1.3 in
\cite{Faraco}). This can be shown by observing that if the lower
Minkowski content is nonzero, then the opposite direction of
\eqref{gamma} holds.

%
We can also characterize the regularity of $\chi_E$ by using
the Bessel potential spaces $L_s^q$.
When $q \neq 2$, $L_s^q(\mathbb R^d)$ and $W_s^q(\mathbb R^d)$ do
not coincide in general. But, for $ 1< q <\infty$ and $0<s<1$, there
are the well-known embeddings
\begin{align}
\label{super}
 \Lambda_s^{q,q} \subset L_s^q\,\,  \text{ for  } q\le 2 ,   \quad \Lambda_s^{q,2} \subset  L_s^q\,\, \text{ for } q\ge
 2, \\
 \label{sub}
 L_s^q \subset \Lambda_s^{q,2} \,\, \text{ for  } q\le 2 ,   \quad  L_s^q  \subset  \Lambda_s^{q,q}\,\, \text{ for } q\ge
 2.
 \end{align}
%
(see \cite{Stein}). As a consequence of embedding \eqref{super}, by
the analogous argument in the proof of Theorem \ref{sickel} (see
\cite{Sickel}, also see \cite{Faraco}) it is easy to see that,  for $ q\ge2$,
\begin{align*}\label{lsq}
\| \chi_E \|_{L_s^q} \lesssim |E|^{1/q}+ \Big(
\int_0^1 \delta^{-2s} | (\partial E)_\delta | ^{\frac2q }
\frac{\mathrm d\delta} \delta \Big)^{1/2},
\end{align*}
and, for $q\le 2$,
\begin{align*}
\| \chi_E \|_{L_s^q} \lesssim |E|^{1/q}+ \Big(
\int_0^1 \delta^{-qs} | (\partial E)_\delta |\frac{\mathrm d\delta}
\delta \Big)^{1/q}.
\end{align*}
Even though these inequalities  give  better control of $\|
\chi_E \|_{L_s^q}$ in terms of $|(\partial E)_\delta|$,
these do not give any information at the critical exponent $ s
=(d-\gamma)/ q$. Furthermore, the aforementioned examples of domains
$E$ for which $\chi_E\not\in W_{(d-\gamma)/q}^q (\mathbb R^d) $ and
the embedding \eqref{sub} for $q\ge 2$ (also using $\Lambda_s^{q,q}
= W_s^q $) show  that $\chi_E$ is not generally contained in $L_s^q$
at the critical exponent $ s =(d-\gamma)/ q$. This leads us to
consider the Lorentz-Sobolev space.

For a tempered distribution $ f \in \mathcal S' (\mathbb R^d)$, we
set $ f_s $ by $ \widehat{f_s}(\xi) =\left(  1 + |\xi|^2
\right)^{\frac s 2 } \widehat{f}(\xi)$. Then the Lorentz-Sobolev
spaces $ L_s^{q,r} (\mathbb R^d) $ are defined by the norm
$
  \| f \|_{L_s^{q,r}}
= \| f_s \|_{L^{q,r}}
$
for $1\le q <\infty$ and $1\le r \le \infty $ where $L^{q,r}$ denote the Lorentz spaces.
By the Chebyshev inequality, the Bessel potential
 spaces $L_s^q (\mathbb R^d) $ are embedded in
 $ L_s^{q,\infty}(\mathbb R^d) $.
For more detail on Lorentz-Sobolev spaces we refer the reader to the
recent literature \cite{Hajaiej, Machihara, Wadade, Xiao}.

 In what follows we show  that $\chi_E\in L_s^{q,\infty}(\mathbb R^d)$ at the critical
  $ s= (d-\gamma)/q
$.

\begin{thm}\label{maintheorem2}
Let $E$ be a bounded domain in $\mathbb R^d$ satisfies \eqref{gamma}
for $ 0<\delta \ll 1$. If $0<\gamma<d$ and $1<q<\infty$, then $
\chi_E \in L_s^{q,\infty}(\mathbb R^d) $ for $ s=(d-\gamma)/q$.
\end{thm}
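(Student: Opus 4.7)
The plan is to deduce the weak-type membership from a pointwise bound on the Littlewood--Paley square function of $(\chi_E)_s$ by a power of $\dist(\cdot,\partial E)$. Fix a radial $\psi\in C_c^\infty(\mathbb R^d)$ supported in $\{1/2\le |\xi|\le 2\}$ with $\sum_{k\in\mathbb Z}\psi(2^{-k}\xi)=1$ for $\xi\ne 0$, giving Littlewood--Paley projections $P_k$ and a low-frequency complement $S_0$. Since $\widehat{S_0(\chi_E)_s}$ is smooth and compactly supported, $S_0(\chi_E)_s=\phi*\chi_E$ for a Schwartz $\phi$, hence bounded and essentially supported on a fixed compact set, so in $L^{q,\infty}$ trivially. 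It remains to handle $g:=\sum_{k\ge 1}P_k(\chi_E)_s$. By the Littlewood--Paley inequality on $L^{q,\infty}$ for $1<q<\infty$ (valid by vector-valued Calder\'on--Zygmund theory together with Marcinkiewicz-type real interpolation; this is presumably the ``refined'' statement foreshadowed in the abstract), $\|g\|_{L^{q,\infty}}\lesssim \|Sg\|_{L^{q,\infty}}$, where $Sg(x):=\bigl(\sum_{k\ge 1}|P_k g(x)|^2\bigr)^{1/2}$.

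On the annulus $|\xi|\sim 2^k$ the multiplier $\psi(2^{-k}\xi)(1+|\xi|^2)^{s/2}$ equals $2^{ks}\widetilde\psi(2^{-k}\xi)$ for a Schwartz bump $\widetilde\psi$ of the same support; hence $|P_k(\chi_E)_s(x)|\lesssim 2^{ks}|\widetilde P_k\chi_E(x)|$, where the convolution kernel $\widetilde\psi_k$ of $\widetilde P_k$ is a mean-zero Schwartz bump at scale $2^{-k}$. The key cancellation step is then the identity
\[
\widetilde P_k\chi_E(x) = \int \widetilde\psi_k(x-y)\bigl(\chi_E(y)-\chi_E(x)\bigr)\,dy,\qquad x\notin \partial E,
\]
whose integrand vanishes unless $y$ lies on the opposite side of $\partial E$ from $x$; in that case the segment $[x,y]$ meets $\partial E$, forcing $|x-y|\ge r(x):=\dist(x,\partial E)$. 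Using $|\widetilde\psi_k(z)|\lesssim_N 2^{kd}(1+2^k|z|)^{-N}$ one obtains the pointwise bound $|\widetilde P_k\chi_E(x)|\lesssim_N \min\{1,(2^k r(x))^{-N}\}$, so summing a geometric series in $k$ (taking $N>s$) gives $Sg(x)\lesssim r(x)^{-s}$ wherever $r(x)$ is small, with a bounded, essentially compactly supported contribution otherwise.

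The weak-type conclusion is then immediate from \eqref{gamma}: for $\lambda$ large,
\[
\bigl|\{x:r(x)^{-s}>\lambda\}\bigr| = |(\partial E)_{\lambda^{-1/s}}|\lesssim \lambda^{-(d-\gamma)/s} = \lambda^{-q},
\]
using $s=(d-\gamma)/q$, while for $\lambda$ bounded the distribution function is controlled by a fixed constant; this gives $r(\cdot)^{-s}\in L^{q,\infty}$ and hence $\chi_E\in L_s^{q,\infty}$.

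The main technical obstacle is the upgrade of the classical $L^q$ Littlewood--Paley square function equivalence to the Lorentz space $L^{q,\infty}$; once this is in hand, the pointwise cancellation step and the final weak-type conversion are essentially bookkeeping. A secondary concern is justifying the tail estimates for $\widetilde\psi_k$ carefully enough to control contributions from points $x$ far from $\partial E$ but still inside $\supp\chi_E$, where cancellation across the boundary is what provides decay; this is routine given the Schwartz decay of $\widetilde\psi_k$.
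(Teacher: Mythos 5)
Your proposal is correct, but it follows a genuinely different route from the paper. The paper constructs a special function $\phi$ whose inverse Fourier transform $\phi^\vee$ is \emph{compactly supported} in physical space and has vanishing moments (Lemma~\ref{keylemma}); this compact support is what drives Lemma~\ref{packet}, giving the norm bound $\|P_k\chi_E\|_p\lesssim|(\partial E)_{2^{-k}}|^{1/p}$. The paper then applies Chebyshev separately to the low-$k$ and high-$k$ halves of the square function with two exponents $p_0>q/2>p_1$, and optimizes the cut-off $N$. You instead use a standard Littlewood--Paley decomposition with compactly supported \emph{Fourier-side} multipliers and establish a \emph{pointwise} bound $Sg(x)\lesssim\dist(x,\partial E)^{-s}$ (for $r(x)=\dist(x,\partial E)<1$, with faster decay for $r(x)\ge1$) by exploiting the mean-zero property of the kernel and the observation that $\chi_E(y)\neq\chi_E(x)$ forces $|x-y|\ge r(x)$. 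The weak-type conclusion is then a one-line computation of the distribution function of $r(\cdot)^{-s}$, using \eqref{gamma} and $s=(d-\gamma)/q$. Your approach avoids the construction of the special kernel and the two-exponent optimization, at the cost of needing to track the Schwartz tails of the non--compactly-supported kernel $\widetilde\psi_k$ for the far-field contribution; the paper's compactly supported kernel makes that step local and immediate. Both approaches need the Lorentz-space version of the reverse Littlewood--Paley inequality (the paper's Lemma~\ref{wood}, inequality \eqref{Pkright}); your justification via vector-valued Calder\'on--Zygmund theory plus Marcinkiewicz interpolation is the right one, and the paper proves exactly this. Two small points worth tightening: the tail integral actually gives $|\widetilde P_k\chi_E(x)|\lesssim(2^kr(x))^{-(M-d)}$ rather than $(2^kr(x))^{-M}$ (inconsequential after choosing $M$ large), and the contribution from $\{r(x)\ge1\}$ is not compactly supported but only decays polynomially like $|x|^{-(M-d)}$, so one must choose $M>d+d/q$ to keep it in $L^{q,\infty}$; this is routine but should be said.
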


The paper is organized as follows: In Section 2, we prove a refined
Littlewood-Paley inequality in  which the projection operators have
preferable cancellation property. Theorem \ref{maintheorem1} and
Proposition \ref{fchar} are proved in Section 3. The proof of
Theorem \ref{maintheorem2} is given in Section 4.

\section{ Preliminaries }

In this section we prove a version of Littlewood-Paley inequality 
which plays a crucial role for the proof of our results. Most
important feature is that the associated projection operators have a
cancellation property when they are applied to the characteristic
functions of open sets. For this purpose we need to find a smooth
function $ \phi \in \mathcal S(\mathbb R^d)$ which satisfies special
properties.

We denote by $B_r(a)$ the open ball of radius $r$ which is centered
at the point $a$.

\begin{lem} \label{keylemma}
There exists a Schwartz function $\phi$ such that  $ \phi ^\vee $ is
supported on $B_1(0)$ and $\phi$ satisfies
    \begin{equation}\label{lem2}
    \int_{\mathbb R^d} \phi ^\vee (x) \,\mathrm dx=0,
    \end{equation}
and, for some constants $ C_1 $, $C_2 >0$,
    \begin{equation}\label{lem1}
    C_1 \leq \sum_{ k=-\infty}^{\infty} \phi^{2}( 2^{ -k }\xi) \leq
    C_2.
    \end{equation}
Moreover, for any positive integer $N$,
    \begin{equation}\label{lem3}
    \int_{\mathbb R^d} x^\beta \phi ^\vee (x) \,\mathrm dx=0 \quad
    \text{if} \quad |\beta|< 2^N.
    \end{equation}
\end{lem}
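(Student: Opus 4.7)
The plan is, for each prescribed positive integer $N$, to construct a concrete $\phi$ of the form
\[
\phi(\xi) := |\xi|^{2M}\,\widehat\eta(\xi),
\]
where $M$ is any integer with $2M\ge 2^N$ (e.g.\ $M=2^{N-1}$) and $\eta\in C_c^\infty(\mathbb R^d)$ is a fixed nonnegative radial bump supported in $B_{1/2}(0)$ with $\int\eta>0$. Note that a single $\phi$ cannot do for all $N$ at once: an entire function vanishing to infinite order at the origin must vanish identically, so one should read the ``moreover'' clause as allowing $\phi=\phi_N$.

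Since $|\xi|^{2M}=(\sum_i\xi_i^2)^M$ is a polynomial and $\widehat\eta\in\mathcal S$, the product $\phi$ is Schwartz. Its inverse Fourier transform is, up to a nonzero constant, $(-\Delta)^M\eta$, which is supported in $\supp\eta\subset B_1(0)$; this gives the support condition and, taking $\beta=0$, also \eqref{lem2}. Because $\phi$ has a zero of order $2M$ at the origin, every $\partial^\beta\phi(0)$ with $|\beta|<2M$ vanishes, and the identity $\int x^\beta\phi^\vee(x)\,dx=c_\beta\,\partial^\beta\phi(0)$ yields \eqref{lem3}.

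The main step is \eqref{lem1}. By the scale invariance $g(2\xi)=g(\xi)$ of $g(\xi):=\sum_k\phi^2(2^{-k}\xi)$, it suffices to bound $g$ above and below on the compact set $\{1\le|\xi|\le 2\}$. For the upper bound, the Schwartz decay $|\widehat\eta(\zeta)|\lesssim (1+|\zeta|)^{-A}$ with $A>4M+d$ makes both tails of $\sum_k|2^{-k}\xi|^{4M}|\widehat\eta(2^{-k}\xi)|^2$ geometric, with a bound uniform in $\xi$ on the annulus. For the lower bound, the key point is that $\widehat\eta(0)=\int\eta>0$, so by continuity there exist $\epsilon,c_0>0$ with $|\widehat\eta(\zeta)|\ge c_0$ for $|\zeta|\le\epsilon$. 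On the annulus $1\le|\xi|\le 2$ and for every $k\ge K_0:=\lceil\log_2(2/\epsilon)\rceil$, we have $|2^{-k}\xi|\le\epsilon$ and hence $\phi^2(2^{-k}\xi)\ge c_0^2|2^{-k}\xi|^{4M}$; summing this tail gives a geometric series bounded below by a positive constant uniform in $\xi$.

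I expect the lower bound in \eqref{lem1} to be the only subtle point. Classical smooth Littlewood--Paley decompositions take $\phi$ compactly supported in frequency, which is incompatible with the requirement that $\phi^\vee$ have compact support in space. The remedy above is to pick $\widehat\eta$ with nonzero value at the origin: the dyadic samples $\widehat\eta(2^{-k}\xi)$ stabilize near $\widehat\eta(0)$ for large $k$, compensating for the polynomial factor $|\xi|^{2M}$ whose zero of high order at the origin is precisely what generates the vanishing moments \eqref{lem3}.
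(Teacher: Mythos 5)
Your proof is correct, and it takes a genuinely different route from the paper's. The paper builds $\phi$ as a dyadic difference, $\phi(\xi)=\widehat{\psi_0}(\xi)-\widehat{\psi_0}(2\xi)$ with $\psi_0$ a bump of nonzero mean; the telescoping identity $\sum_{k}\phi(2^{-k}\xi)=\widehat{\psi_0}(0)\neq 0$ then yields the lower bound in \eqref{lem1} via the Cauchy--Schwarz inequality applied to a large finite block, and the higher moment cancellations in \eqref{lem3} are obtained by iterating: rescaling the support to $B_{2^{-N}}$ and replacing $\phi$ by $\phi^{2^N}$. You instead take $\phi(\xi)=|\xi|^{2M}\widehat\eta(\xi)$, so that $\phi^\vee$ is a constant multiple of $(-\Delta)^M\eta$ and the moment conditions come for free from the order-$2M$ zero of $\phi$ at the origin, with no iteration; for the lower bound you sum the geometric tail $\sum_{k\ge K_0}c_0^2|2^{-k}\xi|^{4M}$ directly, using continuity of $\widehat\eta$ near $0$. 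Each version has a small advantage: the paper's telescoping construction makes the resolution-of-identity character of the family manifest, while your polynomial-times-bump construction handles all moment orders in one step and replaces the Cauchy--Schwarz trick with a transparent geometric-series computation. Your preliminary observation that a \emph{single} $\phi$ cannot satisfy \eqref{lem3} for all $N$ at once (since $\phi^\vee$ compactly supported forces $\phi$ real-analytic, so infinitely many vanishing moments would make $\phi\equiv 0$) is correct and clarifies how the statement must be read; this is also how the paper's proof in fact proceeds, even though the lemma's wording leaves it implicit.
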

\begin{proof}
Choose a radial function $\psi_0\in\mathcal S (\mathbb R^d)$ such
that $ \psi_0 $ is supported on $B_{1/2}(0)$ and
$ \int \psi_0(x) \mathrm d x=\widehat{\psi_0}(0)\neq0. $
Then we select $\phi$ by setting
$$
\phi^\vee (x)=\psi_0(x)-2^{-d}\psi_0(2^{-1}x)
$$
and after a change of variables \eqref{lem2} follows.

We now prove the estimate \eqref{lem1}. For scaling it suffices to
prove \eqref{lem1} for $ 1 \leq |\xi| \leq 2 $. Since $\phi^2(0) =
0$, we have $|\phi^2(2^{-k} \xi)| \leq C 2^{-k}|\xi|$. Hence,
    \[
    \begin{aligned}
    \sum_{ k = -\infty}^{\infty}\phi^2( 2^{ -k }\xi)
    \lesssim& \sum_{ k =-\infty}^{\infty}\text{min} \left( 2 ^ { - k }|\xi|, (2 ^ { -k }| \xi |)^{-1} \right)
    \leq C_2.
    \end{aligned}
    \]
This gives the upper bound of \eqref{lem1}. For the lower bound note
that $\phi(\xi)=\widehat{\psi_0}(\xi)-\widehat{\psi_0}(2\xi)$. Since
$\sum_{k=-\infty}^{\infty}\phi(2^{-k}\xi)=\widehat{\psi_0}(0)\neq0$
converges uniformly for $ 1 \leq |\xi|\le  2$, there exists $ i_0 \in
\mathbb Z_+ $ such that
   $
     \frac{1}{2}\big|\widehat{\psi}(0)\big|
    \leq
    \big|\sum_{ k = - i_0 }^{ i_0 }\phi(2^{-k}\xi) \big|.
    $
By the Cauchy-Schwarz inequality, we see
    \begin{equation}\label{cauchy}
    \frac{1}{2}|\widehat{\psi}(0)|
    \leq  ( 2i_0+1 )^{\frac{1}{2}}
    \Big(\sum_{k = -i_0}^{ i_0
    }\phi^2(2^{-k}\xi)\Big)^\frac{1}{2}\le
    ( 2i_0  +1 )^{\frac{1}{2}}
    \Big(\sum_{k = -\infty}^{ \infty
    }\phi^2(2^{-k}\xi)\Big)^\frac{1}{2},
    \end{equation}
which gives the desired uniform lower bound of \eqref{lem1} for $ 1
\leq |\xi|\le  2$.

We now have \eqref{lem3} with $\beta=0$.
We may assume that $\phi^\vee (x)$ is supported
in $B_{2^{-N}}(0)$ by replacing $\phi$ with
$\phi(2^{-N}\cdot)$. In order to have \eqref{lem3} with bigger
$|\beta|<2^N $ we need only to consider $\phi^{2^N}(\xi)$ instead of $
\phi (\xi)$. Then $(\phi^{2^N})^\vee (x)$ is supported on $B_1$ and
$\partial^\beta (\phi^{2^N})(0)=0$ for $|\beta|< 2^N$ which gives
\eqref{lem3}. As before, the estimate \eqref{lem1} follows by
applying the Cauchy-Schwarz inequality $N+1$ times.
\end{proof}

We now  prove the Littlewood-Paley inequality in which the
projection operators are defined by the Schwartz function in Lemma
\ref{keylemma}, and  give a characterization of the Lorentz-Sobolev
spaces. The associated projection operators $P_{\le0}$ and
$P_k$ for $ k \geq 1 $ are similar to the classical
Littlewood-Paley operators but they are different in that the
multiplier does not have compact support. However, the standard
argument works without much of modification. For the reader's
convenience we include a proof.

\medskip

Let $\phi$ be given as in Lemma \ref{keylemma} and define
\begin{align}\label{pkf}
\widehat{P_k f}(\xi) = \phi^{2} \left( 2^{-k} \xi \right)
\widehat{f}(\xi), \quad \widehat{ P_{\le0} f} (\xi) = \Phi_{0}( \xi)
\widehat f (\xi)
\end{align}
where $ \Phi_0 (\xi) = (1+|\xi|^2)^{s/2}\sum_{k=-\infty}^{0} \phi^4 \left( 2^{-k}
\xi \right). $ In what follows we prove  Littlewood-Paley inequality
in Lorentz-Sobolev spaces.

\begin{lem}\label{wood}
Let $ s>0 $ and $ 1< q <\infty$, $1\le r\le \infty$. Then there
exists a constant $C=C(d,q,s)$ such that, for $ f \in L_s^{q,r}
(\mathbb R^d) $,
\begin{equation}\label{Pkleft}
 \|  P_{\le0} f \|_{q,r}
   + \Big\|  \Big(  \sum_{ k =
1 }^\infty  \Big( 2^{ks}|P_k f| \Big) ^2
 \Big) ^{\frac 1 2} \Big\|_{q,r}
\leq C \| f \|_{L_s^{q,r}}
\end{equation}
and, for $ f \in \mathcal S' (\mathbb R^d) $,
\begin{equation}\label{Pkright}
C^{-1} \| f \|_{L_s^{q,r}} \leq   \| P_{\le0} f \|_{q,r}
+ \Big\| \Big(  \sum_{ k = 1 }^\infty \Big( 2^{ks}|P_k f|
\Big) ^2
 \Big) ^{\frac 1 2}  \Big\|_{q,r} .
\end{equation}

\end{lem}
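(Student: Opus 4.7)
The plan is to reduce both \eqref{Pkleft} and \eqref{Pkright} to a vector-valued Fourier-multiplier estimate on Lorentz spaces, then apply vector-valued Calder\'on--Zygmund theory together with real interpolation.

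Setting $g:=f_s$ so that $\|f\|_{L_s^{q,r}}=\|g\|_{q,r}$, and using $\widehat f(\xi)=(1+|\xi|^2)^{-s/2}\widehat g(\xi)$, I first recast the relevant quantities as Fourier multipliers of $g$:
\begin{equation*}
\widehat{P_{\le 0}f}(\xi)=m_0(\xi)\widehat g(\xi),\qquad 2^{ks}\widehat{P_k f}(\xi)=m_k(\xi)\widehat g(\xi)\ (k\ge 1),
\end{equation*}
where $m_0(\xi)=\sum_{j\le 0}\phi^4(2^{-j}\xi)$ and $m_k(\xi)=2^{ks}(1+|\xi|^2)^{-s/2}\phi^2(2^{-k}\xi)$. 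Both \eqref{Pkleft} and \eqref{Pkright} thus amount to the equivalence $\|g\|_{q,r}\sim\bigl\|(m_k(D)g)_{k\ge 0}\bigr\|_{L^{q,r}(\ell^2)}$.

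\textbf{Upper bound.} I would first verify $L^2\to L^2(\ell^2)$ boundedness via Plancherel, which reduces to the pointwise estimate $\sum_{k\ge 0}|m_k(\xi)|^2\lesssim 1$; this follows from the Schwartz decay of $\phi$ together with its vanishing of sufficiently high order at the origin, which is available by \eqref{lem3} upon choosing $N$ large depending on $s$. A direct calculation then yields the vector-valued Mihlin condition $\sum_{k\ge 0}|\partial^\alpha m_k(\xi)|^2\lesssim|\xi|^{-2|\alpha|}$, so that the associated $\ell^2$-valued convolution kernel obeys the H\"ormander integrability condition. Classical vector-valued Calder\'on--Zygmund theory then gives strong-type $L^p\to L^p(\ell^2)$ bounds for every $1<p<\infty$ and a weak-type $(1,1)$ endpoint; Marcinkiewicz real interpolation promotes these to the desired $L^{q,r}$-bound in \eqref{Pkleft}.

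\textbf{Lower bound.} Set $\Sigma(\xi):=\sum_{j\in\mathbb Z}\phi^4(2^{-j}\xi)$; combining the Schwartz decay of $\phi$ with \eqref{lem1} (after Cauchy--Schwarz) shows that $\Sigma$ is bounded above and below by positive constants on $\mathbb R^d\setminus\{0\}$ and satisfies Mihlin-type derivative bounds there, so $\Sigma^{-1}(D)$ is a bounded multiplier on every $L^{q,r}$ with $1<q<\infty$. Using the identity $\phi^4(2^{-j}\xi)\widehat g(\xi)=\phi^2(2^{-j}\xi)(1+|\xi|^2)^{s/2}\widehat{P_j f}(\xi)$ and splitting the sum at $j=0$ gives the reconstruction
\begin{equation*}
\widehat g(\xi)=\Sigma^{-1}(\xi)\widehat{P_{\le 0}f}(\xi)+\sum_{j\ge 1}n_j(\xi)\bigl(2^{js}\widehat{P_j f}(\xi)\bigr),
\end{equation*}
with $n_j(\xi):=\Sigma^{-1}(\xi)\,2^{-js}(1+|\xi|^2)^{s/2}\phi^2(2^{-j}\xi)$ satisfying the same Mihlin--H\"ormander bounds as $\{m_k\}$. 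The first term on the right is controlled in $L^{q,r}$ by $\|P_{\le 0}f\|_{q,r}$. For the second, testing against $h\in L^{q',r'}$, applying Cauchy--Schwarz in $j$, and then H\"older in Lorentz spaces reduces matters to bounding $\bigl\|\bigl(\sum_{j\ge 1}|n_j(D)^*h|^2\bigr)^{1/2}\bigr\|_{q',r'}\lesssim\|h\|_{q',r'}$, which follows from the upper bound already proved applied to the adjoint family. Taking the supremum over $h$ yields \eqref{Pkright}.

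\textbf{Main obstacle.} The principal difficulty is that the multipliers $\phi^2(2^{-k}\xi)$ are not compactly supported, so the usual quasi-orthogonality $P_jP_k=0$ for $|j-k|$ large is unavailable; one must rely throughout on the Schwartz decay of $\phi$ to control the interaction between frequency scales. A related difficulty is that $\phi(0)=0$ forces the reconstruction symbol $\Sigma^{-1}$ to be defined only off the origin, and the low-frequency Littlewood--Paley piece $P_{\le 0}$ must incorporate the factor $(1+|\xi|^2)^{s/2}$ precisely in order to absorb this singularity cleanly.
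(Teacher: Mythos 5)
Your proposal is correct, and the lower-bound (duality) argument is essentially identical to the paper's: you reconstruct $\widehat{f_s}$ via the identity $1=\Sigma^{-1}\bigl(\sum_{j\le 0}\phi^4(2^{-j}\cdot)+\sum_{j\ge 1}\phi^4(2^{-j}\cdot)\bigr)$, pair against $h\in L^{q',r'}$, and use Cauchy--Schwarz plus Lorentz H\"older together with the already-established square-function bound for the adjoint family $\{n_j\}$. Your multipliers $n_j$ are exactly the paper's $m_j'$ and $\Sigma^{-1}$ is its $m_0'$.

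The upper bound is where you diverge modestly. The paper reduces the square function to a family of \emph{scalar} multipliers $\sum_k\omega_k m_k$ via Khintchine's inequality, verifies a Mikhlin condition uniformly in the signs $\omega_k$ (Lemma \ref{cond}), applies the scalar Mikhlin theorem to get $L^q\to L^q$ for $1<q<\infty$, and then interpolates two such $L^q$ bounds by Marcinkiewicz to reach $L^{q,r}$. You instead treat $g\mapsto(m_k(D)g)_k$ directly as an $\ell^2$-valued singular integral: you check the $\ell^2$-Mihlin condition $\bigl(\sum_k|\partial^\alpha m_k|^2\bigr)^{1/2}\lesssim|\xi|^{-|\alpha|}$, invoke vector-valued Calder\'on--Zygmund theory for strong $L^p$ bounds and a weak-$(1,1)$ endpoint, then interpolate. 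Both routes hinge on the same pointwise derivative estimates coming from the high-order vanishing of $\phi$ at the origin (\eqref{lem3}) and the Schwartz decay at infinity, and both arrive at $L^{q,r}$ by Marcinkiewicz; the vector-valued route avoids randomization at the cost of invoking the Hilbert-space-valued CZ machinery, while the paper's Khintchine route stays entirely scalar. The weak-$(1,1)$ endpoint you obtain is more than is needed for $1<q<\infty$, but it is harmless. You also correctly identify the two structural subtleties the paper handles: the non-compact frequency support of $\phi^2(2^{-k}\cdot)$ (compensated by rapid decay), and the necessity of the weight $(1+|\xi|^2)^{s/2}$ inside $\Phi_0$ so that $\widehat{P_{\le 0}f}=m_0\widehat{f_s}$ with $m_0=\sum_{j\le 0}\phi^4(2^{-j}\cdot)$ a bona fide Mikhlin symbol.
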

Before proving this lemma, we need to prove the following estimates
which allow us to use  the Mikhlin multiplier theorem.

\begin{lem}\label{cond}  Let $s>0$ and $ \phi $ be given as  in Lemma
\ref{keylemma}.  Also, let $ m_k $ and $ m_k'$ be given
by
\begin{equation}\label{multiplier}
m_{k} ( \xi ) =    2^{ks}
 \phi^{2} ( 2^{-k}\xi)  (1+|\xi|^2)^{-\frac s 2},
\end{equation}
\begin{equation}\label{multiplier2}
m_{k}' ( \xi ) =    2^{-ks}
 \phi^{2} ( 2^{-k}\xi)  (1+|\xi|^2)^{\frac s 2} \Big( \sum_{j=-\infty}
^\infty \phi^4 (2^{-j}\xi) \Big)^{-1}.
\end{equation}
Let $ \{ \omega_k \} $ be a sequence of constants having values $
\pm 1 $. Then
\begin{equation}\label{mikh}
\Big|  \partial_\xi ^ \alpha \Big( \sum_{ k=1 }^L\omega_k m_{k} (
\xi ) \Big)  \Big| \lesssim  | \xi | ^ {- | \alpha | }
\end{equation}
for all $ |\alpha| \leq \frac d 2 +1 $ and an analogue of
\eqref{mikh}  also holds if  $m_k$  is replaced with
$m_k'$.
\end{lem}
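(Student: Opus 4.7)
The plan is to prove the stronger pointwise bound
\[
\sum_{k=1}^L \bigl|\partial_\xi^\alpha m_k(\xi)\bigr| \lesssim |\xi|^{-|\alpha|}, \quad \xi \ne 0,
\]
uniformly in $L$, from which \eqref{mikh} follows by the triangle inequality regardless of the signs $\omega_k$. For $\xi$ fixed I would split the $k$-sum into three regimes according to the relative sizes of $2^k$ and $|\xi|$: (i)~$2^k \sim |\xi|$ (only $O(1)$ terms), (ii)~the low regime $2^k \ll |\xi|$, and (iii)~the high regime $2^k \gg |\xi|$. By the Leibniz rule, $\partial_\xi^\alpha m_k(\xi)$ is a linear combination over $\alpha_1 + \alpha_2 = \alpha$ of
\[
2^{k(s-|\alpha_1|)}\,(\partial^{\alpha_1}\phi^2)(2^{-k}\xi)\, \partial^{\alpha_2}\bigl((1+|\xi|^2)^{-s/2}\bigr),
\]
and $|\partial^{\alpha_2}(1+|\xi|^2)^{-s/2}| \lesssim (1+|\xi|)^{-s-|\alpha_2|}$ by a routine chain-rule estimate.

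Regime~(i) contributes $O(|\xi|^{-|\alpha|})$ by direct bounding with $\phi^2$ and its derivatives uniformly bounded. In regime~(ii) I would invoke Schwartz decay, $|(\partial^{\alpha_1}\phi^2)(2^{-k}\xi)| \lesssim (2^{-k}|\xi|)^{-N_0}$ for any $N_0$, yielding each term bounded by $2^{k(s+N_0-|\alpha_1|)} |\xi|^{-N_0-s-|\alpha_2|}$; summing the resulting geometric series over $k$ with $2^k \ll |\xi|$ produces $|\xi|^{-|\alpha|}$ provided $N_0$ is chosen large enough (depending on $|\alpha|$ and $s$). Regime~(iii) is where cancellation enters: by taking $N$ large in Lemma~\ref{keylemma} we may assume $\phi^2$ vanishes at the origin to arbitrarily high order $M$, so that $|(\partial^{\alpha_1}\phi^2)(2^{-k}\xi)| \lesssim (2^{-k}|\xi|)^{M-|\alpha_1|}$ for all relevant $\alpha_1$. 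Each term is then bounded by $2^{k(s-M)} |\xi|^{M-s-|\alpha|}$, and summing over $k$ with $2^k \gg |\xi|$ (a convergent geometric series once $M > s$) once again yields $|\xi|^{-|\alpha|}$.

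I expect regime~(iii) to be the main obstacle, since it is the only one where decay of a single Schwartz factor is unavailable and one must instead exploit cancellation at the origin --- this is precisely the reason the flexibility to increase $N$ in Lemma~\ref{keylemma} was built in. For the analogue with $m_k'$, the extra ingredient is the smooth factor $g(\xi) := \bigl(\sum_j \phi^4(2^{-j}\xi)\bigr)^{-1}$. Upper and lower bounds on its denominator follow from the same Cauchy--Schwarz argument used to prove \eqref{lem1} (applied with $\phi^2$ in place of $\phi$), so $g$ is smooth, and the derivative estimate $|\partial^\beta g(\xi)| \lesssim |\xi|^{-|\beta|}$ is obtained by running the same three-regime argument on $\sum_j \phi^4(2^{-j}\xi)$ and dividing. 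A final Leibniz expansion then reduces the bound for $m_k'$ to the one for $m_k$ with $-s$ in place of $s$.
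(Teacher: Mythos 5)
Your proposal is correct and is essentially the same argument as the paper's: the paper's single bound $|\partial^\beta(\phi^2(2^{-k}\xi))|\lesssim 2^{-k|\beta|}\min\{(2^{-k}|\xi|)^N,(2^{-k}|\xi|)^{-M}\}$ is exactly your three-regime split (vanishing at the origin via \eqref{lem3} for $2^k\gg|\xi|$, Schwartz decay for $2^k\ll|\xi|$, trivial for $2^k\sim|\xi|$) packaged as a $\min$, and the summation over $k$ then works out identically. The treatment of $m_k'$ also matches: the paper likewise controls the factor $\bigl(\sum_j\phi^4(2^{-j}\xi)\bigr)^{-1}$ by the Cauchy--Schwarz lower bound from \eqref{cauchy} and absorbs it into the Leibniz expansion.
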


Here we remark that $\sum_{j=-\infty} ^\infty \phi^4 (2^{-j}\xi)\sim
1$ so that  $\big( \sum_{j=-\infty} ^\infty \phi^4 (2^{-j}\xi)
\big)^{-1}$ is well defined. In fact, this can be shown by applying the
Cauchy-Schwarz inequality to \eqref{cauchy}.

\begin{proof}
Let $\mu$ be a multi-index.  If we choose a sufficiently large $N$
in Lemma  \ref{keylemma}, then \eqref{lem3} guarantees that $
 \partial^\mu ( \partial^{\beta} \phi) (0) = 0$ for $ |\beta| \leq
\frac d 2 +1 $ and $|\mu|\le N$. Hence, for $ |\beta|
\leq \frac d 2 +1 $ and $M>0$,
\begin{equation}\label{betabd}
\big| \partial ^{\beta} (\phi^2 (2^{-k}\xi)) \big| \lesssim 2^{-k|\beta|} \min
\Big\{ \left( 2^{-k} |\xi| \right)^{N} , (2^{-k} |\xi|) ^
{-M} \Big\}.
\end{equation}
Using  \eqref{betabd} with large enough $N, M$,  we see that, for $
|\alpha| \leq \frac d 2 +1, $
\begin{align*}
&\Big|  \partial^ { \alpha } \Big( \sum_{ k=1 }^L \omega_k  m_{k}
( \xi ) \Big)  \Big| \lesssim \sum_{k=1}^L \sum_{\beta+\gamma =
\alpha} 2^{ks} \Big|\partial^{\beta} \big\{ \phi^2(2^{-k}\xi)\big\}
\partial^{\gamma} \big\{ (1+|\xi|^2)^{-\frac s 2} \big\} \Big|
\\
&\lesssim  \sum_{k=1}^\infty \sum_{\beta+\gamma=\alpha}
2^{k(s-|\beta|)} \min \big\{ \left( 2^{-k} |\xi| \right)^{N}
 , (2^{-k} |\xi|) ^ {-M} \big\} |\xi|^{-s-|\gamma|} \lesssim |
\xi |^ { -|\alpha| }.
\end{align*}
This gives the desired inequality \eqref{mikh}.
Similarly, $|\partial^ { \alpha }(\sum_{ k=1 }^L
\omega_k m_k' ( \xi ))|$ is bounded by
\begin{align*}
&\sum_{k=1}^L\sum_{\beta+\gamma+\delta= \alpha} 2^{-ks}
\Big|\partial^{\beta}  \big\{ \phi^2(2^{-k}\xi)\big\}\Big|
 \Big|\partial^{\gamma}\big\{ (1+|\xi|^2)^{\frac s 2} \big\} \Big|\Big|
\partial^{\delta} \Big\{ \Big(\sum_{j=-\infty}^\infty \phi^4
(2^{-j}\xi) \Big)^{-1} \Big\}\Big| \\
%
%
&\lesssim \sum_{ k=1 }^\infty
\sum_{\beta+\gamma+\delta=\alpha} 2^{-k(s+|\beta|)} \min \left\{
\left( 2^{-k} |\xi| \right)^{N} , (2^{-k} |\xi|) ^ {-M}
\right\} (1+|\xi|^2)^{s/2}|\xi|^{-|\gamma|-|\delta|}
\end{align*}
by \eqref{betabd} with sufficiently large $N,M$
and is bounded by $ |\xi|^{-|\alpha|} $. 
This completes the proof.
\end{proof}

Now we prove Lemma \ref{wood}.
\begin{proof}
To prove the first part of Lemma \ref{wood}, we make use of the
Mikhlin multiplier theorem (cf. \cite[Theorem 5.2.7]{Grafakos}) and
Khintchine's inequality. By the standard density argument we may
assume  that  $f$ is contained in the Schwartz class.

As for the operator $ P_{\le0} $, by following the argument in the
proof of Lemma  \ref{cond} it easy to see that the multiplier $
\sum_{k=-\infty}^0 \phi^4(2^{-k}\xi) $ satisfies $|\partial^\alpha
(\sum_{k=-\infty}^0 \phi^4(2^{-k}\xi))|\lesssim |\xi|^{-|\alpha|}$.
Hence, by the Mikhlin multiplier theorem  we have $ \| P_{\le0}f \|_q
\lesssim \|f_s\|_q$ for $1<q<\infty$. Then the  Lorentz bound
\begin{equation}\label{loren}
\| P_{\le0}f \|_{q,r} \lesssim \|f_s\|_{q,r} \end{equation} follows
from the Marcinkiewicz interpolation theorem between the $L^q$ estimates.

To  bound the square function of \eqref{Pkleft}, we consider the
multiplier $ m_k(\xi) $ defined by \eqref{multiplier}. Let $ \{
\omega_k \} $ be independent random variables taking values $ \pm 1
$ with equal probability. Since $ ( m_k \widehat{f_s} )^{\vee} =
2^{ks} P_kf $,  Khintchine's inequality gives, for $ 0 < q < \infty
$,
\begin{equation}\label{zero}
  \Big( \sum_{ k = 1}^L \Big( 2^{ks} |P_k f|\Big)^2
 \Big)^{\frac q 2}
\approx \mathbb E \Big( \Big|
 \sum_{ k=1 } ^ L
\Big( \omega_k   m_{k} \widehat {f_s} \Big)^\vee
    \Big|^q \Big)
\end{equation}
with the implicit constants independent of  $ L $. Thanks to
\eqref{mikh} in Lemma \ref{cond} we can apply the Mikhlin multiplier
theorem to the right-hand side of \eqref{zero}. Taking integral both
side of \eqref{zero},  passing to the limit $ L \to \infty $ and
using Fatou's lemma  give
\begin{equation}\label{pk_pp}
\Big\|  \Big( \sum_{ k = 1}^\infty \Big(2^{ks}|P_k f|\Big)^2
 \Big)^{\frac 1 2}  \Big\|_ q^q
\lesssim \|  f_s \|_q^q
\end{equation}
for all $ 1 < q < \infty $. Then \eqref{Pkleft} is an immediate
consequence of the Marcinkiewicz interpolation theorem and this
proves the first part \eqref{Pkleft} of Lemma \ref{wood}.

Now we show the inequality \eqref{Pkright} by using the duality
argument. Let $ f, g $  be Schwartz functions. By the Plancherel
identity we have \[ \int f_s(x) \overline{ g }(x) \, \mathrm dx=
\int \widehat{f_s}(\xi) \overline{\widehat g}(\xi)  \, \mathrm
d\xi.\]
Using the identity $ 1 = \left( \sum_{k=-\infty}^0 \phi^4(2^{-k}\xi)
+ \sum_{k=1}^\infty \phi^4(2^{-k}\xi) \right)
\big(\sum_{j=-\infty}^\infty \phi^4(2^{-j}\xi) \big)^{-1} $, we
decompose  $\widehat{f_s} \overline{\widehat g}$  so that
\begin{align*}
\widehat { f_s }  \overline { \widehat{  g } } & =\widehat{P_{\le0}
f} m_0' \overline{\widehat{g}}+ \sum_{ k=1 } ^ {\infty}  2^{ks}
\widehat { P_k f } m_k' \overline{\widehat{g}   },
\end{align*}
where $ m_0' (\xi) = \big( \sum_{j=-\infty}^\infty
\phi^4(2^{-j}\cdot) \big)^{-1}$ and
 $ m_k' $ is defined in \eqref{multiplier2}.
By repeated use of the Plancherel identity, we have
\begin{equation}\label{dual}
\int
f_s(x) \overline{ g }(x)
\, \mathrm dx
=  \int P_{\le0} f(x) \overline {\big( m_0' \widehat{g}
\big)^{\vee}}(x)
 \,\mathrm dx  + \sum_{ k=1 } ^ {\infty} \int 2^{ks}
 P_k f ( x ) \overline{ \big(
m_k' \widehat{g} \big)^\vee}
 ( x ) \,  \mathrm dx .
\end{equation}

Let $ 1 < q < \infty $ and $ 1\le r \le\infty$
satisfying $ \frac 1 q + \frac 1 {q'} = 1 $ and $ \frac 1r +\frac 1{r'}=1$.
We may apply the Cauchy-Schwarz inequality and the H\"older-type inequality
for Lorentz spaces to obtain
\begin{align*}
&\Big| \int f_s(x) \overline{ g }(x) \, \mathrm dx  \Big| \leq  \|
P_{\le0} f \|_{q,r} \| \big( m_0' \widehat{g} \big)^{\vee}
\|_{q',r'} \\
& \qquad\qquad +  \Big\| \Big( \sum_{ k=1 } ^ {\infty} \Big( 2^{ks}
 |P_k f|  \Big) ^2 \Big) ^ \frac 1 2     \Big\| _{q,r}
\Big\| \Big( \sum_{ k=1 } ^ {\infty} |
\big( m_k' \widehat{g} \big)^\vee  | ^2 \Big) ^ \frac 1 2  \Big\|_{q',r'}.
\end{align*}
Then it is easy to see that $ \|\big( m_0' \widehat{g}
\big)^{\vee} \|_{q',r'} $ is bounded by $ \| g \|_{q',r'} $ by
following the same argument which shows the boundedness of
$P_{\le0}$ (see \eqref{loren}). Since $m_k'$ also satisfies \eqref{mikh} in the place of
$m_k$, by repeating the argument for \eqref{Pkleft} it follows that
\[ \Big\| \Big( \sum_{ k=1 } ^ {\infty} |
\big( m_k' \widehat{g} \big)^\vee  | ^2 \Big) ^ \frac 1 2
\Big\|_{q',r'} \lesssim  \| g \|_{q',r'} .  \] Hence, combining
these two estimates  we have
\begin{align*}
\Big| \int f_s(x) \overline{ g }(x) \, \mathrm dx  \Big|
 \lesssim \| P_{\le0} f \|_{q,r} \| g \|_{q',r'}
+ \Big\| \Big( \sum_{ k=1 } ^ {\infty} \Big( 2^{ks}
 |P_k f|  \Big) ^2 \Big) ^ \frac 1 2     \Big\|_{q,r}\| g \|_{q',r'}
 .
\end{align*}
Finally, taking supremum over Schwartz function $g$ with  $ \| g
\|_{q',r'} \leq 1$ gives the desired inequality \eqref{Pkright}.
\end{proof}

By making use of the function $\phi$ in Lemma \ref{keylemma} we
prove the following which relates the $L^p$-norm of $P_k\chi_E$ and
the measure of $(\partial E)_{2^{-k}}$.

\begin{lem}\label{packet}
Let $E$ be a bounded domain in $ \mathbb R^d $ and $ 1\leq p < \infty$.
If a Schwartz function $ \phi^\vee$
is supported on $ B_1(0) $ and
satisfies \eqref{lem2}, then
\begin{equation}\label{nbd}
\Big\| \Big( \phi(2^{-k}\xi)\widehat{\chi_E}(\xi) \Big)^{\vee} \Big\|_p
\lesssim | (\partial E)_{2^{-k}} |^{1/p}.
\end{equation}
Additionally, if we use $\phi^2$ instead of $\phi$,
\begin{equation}\label{PK}
\big\| P_k\chi_E \big\|_{p}
=\Big\| \Big( \phi^2(2^{-k}\xi)\widehat{\chi_E}(\xi) \Big)^{\vee} \Big\|_{p}
\lesssim |(\partial E)_{2^{-k+1}}|^{1/p}.
\end{equation}
\end{lem}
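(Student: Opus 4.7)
The plan is to combine the compact support of $\phi^\vee$ with its vanishing mean to show that the convolution $(\phi(2^{-k}\xi)\widehat{\chi_E})^\vee$ is supported inside the $2^{-k}$-neighborhood of $\partial E$, and then to bound it pointwise by a constant. This immediately yields the $L^p$ estimate upon taking the $p$-th power and integrating.

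More precisely, write
\[
\bigl( \phi(2^{-k}\xi)\widehat{\chi_E}(\xi)\bigr)^\vee(x)
 = \int_{\mathbb R^d} \phi_k^\vee(x-y)\chi_E(y)\,\mathrm dy,
\qquad \phi_k^\vee(z):=2^{kd}\phi^\vee(2^kz),
\]
so that $\phi_k^\vee$ is supported in $B_{2^{-k}}(0)$ and, by \eqref{lem2}, satisfies $\int \phi_k^\vee=0$. The first step is to exploit this cancellation: if $\mathrm{dist}(x,\partial E)>2^{-k}$, then $\chi_E$ is constant on $B_{2^{-k}}(x)$, so the convolution equals $\chi_E(x)\int\phi_k^\vee=0$. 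Hence the function in question is supported in $(\partial E)_{2^{-k}}$.

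The second step is the trivial pointwise bound
\[
\Bigl| \bigl( \phi(2^{-k}\xi)\widehat{\chi_E}(\xi)\bigr)^\vee(x)\Bigr|
\le \int|\phi_k^\vee(x-y)|\,\mathrm dy = \|\phi^\vee\|_1,
\]
which is a constant independent of $k$. Combining the support information with this $L^\infty$ bound gives
\[
\Bigl\| \bigl(\phi(2^{-k}\xi)\widehat{\chi_E}(\xi)\bigr)^\vee\Bigr\|_p^p
\le \|\phi^\vee\|_1^{\,p}\, |(\partial E)_{2^{-k}}|,
\]
which is precisely \eqref{nbd}.

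For \eqref{PK} with $\phi^2$ in place of $\phi$, the key observation is that $(\phi^2)^\vee=\phi^\vee*\phi^\vee$ is supported in $B_2(0)$ and still has vanishing integral since $\phi^2(0)=0$ (because $\phi(0)=\widehat{\psi_0}(0)-\widehat{\psi_0}(0)=0$ from the construction in Lemma~\ref{keylemma}). Consequently the rescaled kernel $2^{kd}(\phi^2)^\vee(2^k\cdot)$ is supported in $B_{2^{-k+1}}(0)$ with zero mean and $L^1$-norm bounded by $\|\phi^\vee\|_1^2$, so repeating the two steps above with $2^{-k}$ replaced by $2^{-k+1}$ yields \eqref{PK}. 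The only place any care is needed is checking $\phi^2(0)=0$; everything else is a direct copy of the argument for \eqref{nbd}.
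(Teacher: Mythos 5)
Your proof is correct and follows essentially the same approach as the paper: express $(\phi(2^{-k}\cdot)\widehat{\chi_E})^\vee$ as a convolution, use the compact support of $\phi^\vee$ together with its vanishing mean to show the output is supported in $(\partial E)_{2^{-k}}$, and combine this with the trivial pointwise bound by $\|\phi^\vee\|_1$. You spell out the $L^\infty$ step and the case $x\notin E$ a bit more explicitly than the paper does, and your appeal to the specific construction of $\phi$ in Lemma~\ref{keylemma} to get $\phi(0)=0$ is unnecessary (it already follows from the hypothesis \eqref{lem2} since $\phi(0)=\int\phi^\vee$), but these are cosmetic differences.
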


\begin{proof}
Fix $ p $,  $ 1 \leq p < \infty $, and consider
\begin{align*}
\big\| \big( \phi(2^{-k}\xi)\widehat{\chi_E}(\xi) \big)^{\vee} \big\|_p
    =&\Big(\int_{\mathbb R^d}\Big|\int_{\mathbb R^d}\phi^\vee(y)
    \chi_E ( \xi-2^{-k}y )\,dy \,\Big|^p\,d\xi\Big)^{1/p}.
\end{align*}
The crucial observation is that for any fixed $ k $,
the inner integral above survives
only for $ \xi $ such that $\text{dist }(\xi,\partial E)
 \leq 2^{-k}$.
This is because $ \phi^\vee $ is supported on $ B_1 $ and then
the integral vanishes for $ \xi \in E $ such that
$\text{dist }(\xi,\partial E) > 2^{-k}$ due to
\eqref{lem2}.
As a consequence, the $L^p$-norm must be smaller than
$ \left| ( \partial E)_{2^{-k}} \right|^{1/p} $
as desired.

For the second inequality we need only to observe that $\int ( \phi^2)^\vee=0$ and  $( \phi^2)^\vee$ is supported in
$B_2(0)$. The same argument gives \eqref{PK}.
\end{proof}

\section{Proof of Theorem \ref{maintheorem1} and
Proposition \ref{fchar}}

In this section we prove Theorem \ref{maintheorem1}
and Proposition \ref{fchar}.

\begin{proof}[Proof of Theorem \ref{maintheorem1}]
Let $ \phi $ be given as
in Lemma \ref{keylemma}. Then,
we have
\begin{equation}\label{dee}
\big|\widehat{ \chi_E }(\xi)\big| \lesssim \sum_{k =
-\infty}^{\infty}\phi^2 ( 2^{ -k }\xi)\big|\widehat{ \chi_E
}(\xi)\big|.
\end{equation}
To get boundedness in the weak $L^p$ spaces, we separately handle
the $ L^p $-norm of $\phi(2^{-k}\cdot)\widehat{\chi_E}$. Using Lemma
\ref{packet} with $ p=2 $ and the condition \eqref{gamma}, we have
$ \big\|\phi(2^{-k}\xi) \widehat{ \chi_E }(\xi) \big\|_2
   $ $\lesssim 2^{-\frac{k (d-\gamma)}{2}}$.
Hence, using the Cauchy-Schwarz inequality, we get
    \begin{equation}\label{l1norm}
    \big\|\phi^2 ( 2^{ -k }\xi) \widehat{ \chi_E }(\xi) \big\|_1
    \lesssim 2^{\frac{k \gamma}{2}}
    \end{equation}
and by \eqref{PK}
    \begin{equation}\label{l2norm}
    \big\|\phi^2 ( 2^{ -k }\xi) { \widehat{\chi_E} }(\xi) \big\|_2
    \lesssim 2^{-\frac{k (d-\gamma)}{2}}.
    \end{equation}

Let $N$ be an integer to be chosen later.
We consider the distribution function of \eqref{dee}
and apply Chebyshev's inequality
so that
    \begin{align*}
   & \,\,\, \,\,\, \,\,\Big| \Big\{  \xi: \sum_{k= -\infty}^{\infty}
    \phi^2 ( 2^{ -k }\xi)\big|\widehat{ \chi_E }(\xi)\big|>\lambda \Big\}\Big| \\
    &\leq
    \Big| \Big\{\xi: \sum_{k= -\infty}^{N}
    \phi^2 ( 2^{ -k }\xi)\big|\widehat{ \chi_E }(\xi)\big| >\frac{\lambda}{2}\Big\}
    \Big|
    +\Big| \Big\{\xi: \sum_{k=N+1}^{\infty}
    \phi^2 ( 2^{ -k }\xi)\big|\widehat{ \chi_E }(\xi)\big| >\frac{\lambda}{2}\Big\}             \Big|\\
    &\lesssim \lambda^{-1} \Big\|\sum_{k= -\infty}^{N}
    \phi^2 ( 2^{ -k }\xi)\big|\widehat{ \chi_E }(\xi)\big|\,\Big\|_1
    +\lambda^{-2} \Big\| \sum_{k=N+1}^{\infty}
    \phi^2 ( 2^{ -k }\xi)\big|\widehat{ \chi_E }(\xi)\big|\,\Big\|_2^{2}.
    \end{align*}
The last term is bounded by
\[
\lambda^{-1}\sum_{k=-\infty}^N \big\|\phi^2 ( 2^{ -k }\xi)
\widehat{ \chi_E }(\xi)\,\big\|_1
+\lambda^{-2} \Big(\sum_{k=N+1}^\infty \big\|\phi^2 ( 2^{ -k }\xi)
\widehat{ \chi_E }(\xi)\,\big\|_2 \Big)^2
\]
by the Minkowski inequality. By application of the estimates \eqref{l1norm} and
\eqref{l2norm} and summation along $k$, we get
\begin{align*}
    \Big| \Big\{  \xi: \sum_{k= -\infty}^{\infty}
        \phi^2 ( 2^{ -k }\xi)\big|\widehat{ \chi_E }(\xi)\big|>\lambda \Big\}\Big|
        \lesssim  \lambda^{-1}
        2^{\frac{N \gamma}{2}}
        + \lambda^{-2}        2^{-N (d-\gamma)}.
\end{align*}
If we take $ N $ such that $ 2^N \approx \lambda^{-2/(2d-\gamma)} $,
the right hand side is bounded by $ \lambda^{-2d/(2d-\gamma)}$. Hence,
$\widehat{ \chi_E }\in
L^{\frac{2d}{2d-\gamma},\infty}(\mathbb R^d)$ and
this concludes the proof.
\end{proof}

In what follows we prove  Proposition \ref{fchar}. This is done by relating the $ L^p $-norm
 of $\widehat{\chi_E} $ to the integral of $|(\partial E)_\delta|$.

\begin{proof}[Proof of Proposition \ref{fchar}]
From \eqref{nbd} with $p=2$, $\big\|\phi ( 2^{ -k }\xi) { \widehat{\chi_E} }(\xi) \big\|_2 \lesssim |(\partial E)_{2^{-k}}|^{1/2}$.
For $ 1\le p \le 2 $, H\"{o}lder's inequality yields
\begin{equation}\label{rieszin}
\big\|\phi^2 ( 2^{ -k }\xi) { \widehat{\chi_E} }(\xi) \big\|_p
\lesssim
2^{kd(\frac 1 p -\frac 1 2)}|(\partial E)_{2^{-k}}|^{1/2}.
\end{equation}
Since $
\sum_{k=-\infty}^\infty \phi^4(2^{-k}\xi)$ is bounded below
as indicated in Lemma \ref{cond}, we get
\begin{align*}
\big\| \widehat{\chi_E} \big\|_p &\lesssim \Big\|
\sum_{k=-\infty}^0 \phi^4(2^{-k}\xi) |\widehat{\chi_E}(\xi)| \Big\|_p
+
\Big\|\sum_{k=1}^\infty \phi^4 ( 2^{ -k }\xi)  |\widehat{\chi_E} (\xi)| \Big\|_p.
\end{align*}
Clearly, the first term in the right-hand side is bounded by
$C\| \widehat{\chi_E} \|_\infty $, which is in turn bounded by $|E|$. For the second term  it follows  by
H\"older's inequality  and  \eqref{rieszin} that
\begin{align*}
\Big\|\sum_{k=1}^\infty \phi^4 ( 2^{ -k }\xi)  \widehat{\chi_E} (\xi) \Big\|_p^p &\le  \Big\|  \Big( \sum_{k=1}^\infty 
|\phi^2(2^{-k}\xi)\chi_E(\xi)|^p \Big)^{1/p}\Big\|_p^p
&\lesssim  \sum_{k=1}^\infty 2^{kd (1-\frac
p 2)}|(\partial E)_{2^{-k}} |^{\frac p2}.
\end{align*}
Note that the last sum is bounded by
$
\int_0^1 \delta^{-d (1 -\frac p 2)}
 |(\partial E)_{\delta} |^{\frac p2}
\frac{\mathrm d\delta}\delta
$
because $|(\partial E)_{\delta} |$ is increasing in $\delta$.
Therefore, combining these estimates gives
\begin{align*}
\big\| \widehat{\chi_E} \big\|_p
&\lesssim |E| + \Big(\int_0^1 \delta^{-d (1 -\frac p 2)}
 |(\partial E)_{\delta} |^{\frac p2}
\frac{\mathrm d\delta}\delta \Big)^{1/p}.
\end{align*}
 This completes the proof.
\end{proof}

\section{Proof of Theorem \ref{maintheorem2}}

In this section we prove Theorem \ref{maintheorem2}
by using the Littlewood-Paley inequality in the Lorentz-Sobolev spaces $L_s^{q,\infty}$  which  we have proved in Lemma \ref{wood}.

\begin{proof}[Proof of Theorem \ref{maintheorem2}]
Let $1<q<\infty,$ and $s=(d-\gamma)/q$. For the  proof of Theorem \ref{maintheorem2} it is sufficient to  show
that the right-hand side of
\eqref{Pkright} is finite while $f=\chi_E$ and $r=\infty$.

To estimate the first term,
we note that the multiplier $\Phi_0$ of the operator $P_{\le 0}$ satisfies
the Mikhlin multiplier condition by an analogous proof of \eqref{loren}.
In particular, we have
\begin{equation}
\label{trivial}
 \|P_{\le0}\chi_E\|_{q,\infty}\lesssim \|\chi_E\|_{q,\infty}\sim |E|^\frac1q.
 \end{equation}

We now examine the square function which appears in \eqref{Pkright}. Let us choose
$ p_0 $ and $ p_1 $ such that
$1 < 2p_1 < q < 2p_0<\infty$, and as before let $N$ be an integer to be chosen later.
By a simple manipulation and Chebyshev's inequality we see
\begin{align*}
&\,\,\,\,\,\,\,\,\,\Big| \Big\{ x :  \Big(  \sum_{ k =1} ^ \infty  \Big(
2^{ks} | P_k  \chi _E (x)  | \Big) ^ 2
    \Big) ^{1/2}    >    \lambda   \Big\} \Big| \\
&\leq \, \Big| \Big\{ x :    \sum_{ k =1} ^ N  
2^{2ks} | P_k \chi _E  (x)  | ^ 2
      >   \frac {\lambda^2} 2  \Big\} \Big|
+ \Big| \Big\{ x :    \sum_{ k = N+1} ^ \infty 
 2^{2ks} | P_k  \chi _E (x)   | ^ 2
      >  \frac{ \lambda^2 } 2  \Big\} \Big|  \\
&\lesssim \,
  \lambda^{-2p_0}\Big\| \sum_{ k = 1} ^ N 
  2^{2ks} | P_k \chi _E   |^ 2
 \Big\| _ {p_0 }^ {p_0}
+
 \lambda^{-2p_1} \Big\| \sum_{ k = N+1} ^ \infty 
 2^{2ks} | P_k \chi _E   | ^ 2
 \Big\| _ {p_1} ^ {p_1}.
\end{align*}
By Minkowski's inequality, the above sum is bounded by
\[
\lambda^{-2p_0} \Big(
\sum_{k=1}^N 2^{2ks} \| (P_k\chi_E)^2 \|_{p_0}
\Big)^{p_0}
+
\lambda^{-2p_1} \Big(
\sum_{k=N+1}^\infty 2^{2ks} \| (P_k\chi_E)^2 \|_{p_1}
\Big)^{p_1}.
\]
Applying \eqref{PK} with $p=2p_0$ and $p=2p_1$ respectively
and \eqref{gamma} with $d-\gamma=sq$, we get
\begin{align*}
\Big| \Big\{ x :  \Big(  \sum_{ k =1} ^ \infty  &\Big(
2^{ks} | P_k  \chi _E (x)  | \Big) ^ 2
    \Big) ^{1/2}    >    \lambda   \Big\} \Big| \\
    &\lesssim \lambda^{-2p_0} \Big(
\sum_{k=1}^N 2^{2ks} 2^{-ksq/{p_0}} \Big)^{p_0}
+\lambda^{-2p_1} \Big( \sum_{k=N+1}^\infty 2^{2ks} 2^{-ksq/{p_1}}
\Big)^{p_1} \\
    &\lesssim \lambda^{-2p_0} 2^{Ns(2p_0-q)} + \lambda^{-2p_1}2^{Ns(2p_1-q)}.
\end{align*}
Choosing  $ N $ to be $2^{Ns} \approx \lambda $, the right hand side is bounded by $C\lambda^{-q}$. Hence,
\[ \Big\| \Big(  \sum_{ k =1} ^ \infty  \Big(
2^{ks} | P_k  \chi _E (x)  | \Big) ^ 2
    \Big) ^{1/2}\Big\|_{q,\infty} \lesssim 1 .\]
Combining this with \eqref{trivial} and using \eqref{Pkright}  we conclude that
$ \| \chi_E \|_{L^{q,\infty}_s(\mathbb R^d)}<\infty $.  This
completes the proof.
\end{proof}

\bibliographystyle{plain}

\begin{thebibliography}{CSWW}


\bibitem{Falconer} K. J. Falconer,
\textit{Fractal geometry: Mathematical foundations and
applications}, Wiley Press, 2003.

\bibitem{Falconer2} ------,
\textit{The Geometry of Fractal Sets},
 Cambridge University Press, Cambridge, 1985.

\bibitem{Faraco} D. Faraco, K.M. Rogers,
\textit{The Sobolev norm of characteristic functions with
applications to the calder\'on inverse problem}, Q. J. Math.
\textbf{64}  (2013), 133--147.

\bibitem{Grafakos} L. Grafakos,
\textit{Classical and Modern Fourier Analysis}, Pearson Education.
Inc, 2004.

\bibitem{Hajaiej} H. Hajaiej, X. Yu, Z. Zhai,
\textit{Fractional Gagliardo-Nirenberg and Hardy inequalities under Lorentz norms},
J. Math. Anal. Appl. \textbf{396} (2012), 569--577.



\bibitem{Herz} C. S. Herz,
\textit{Fourier transforms related to convex sets}, Ann. of Math.
\textbf{75} (1962), 81--92.





\bibitem{Lebedev} V. Lebedev,
\textit{On the Fourier transform of the characteristic functions of
domains with $C^1$-smooth boundary}, Funct. Anal. Appl. \textbf{47}
(2013), 27--37.

\bibitem{Machihara} S. Machihara, T. Ozawa and H. Wadade,
\textit{Generalizations of the logarithmic Hardy inequality in
critical Sobolev—Lorentz spaces}, J. Ineq. Appl. 2013, \textbf{381} (2013),
pp. 14.

\bibitem{Mattila} P. Mattila,
\textit{Geometry of sets and measures in Euclidean spaces}, Cambrige
Univ. Press, 1995.


\bibitem{Sickel} W. Sickel,
\textit{Pointwise multipliers of Lizorkin-Triebel spaces}, The
Maz'ya anniversary collection, Vol. 2 (Rostock, 1998),
Operator Theory: Advances and Applications 110,
 Birkh\"auser, Basel, 1999, 295--321.

\bibitem{Stein} E. M. Stein,
\textit{Singular integrals and differentiability properties of
functions}, Princeton
University Press, 1970.

\bibitem{Triebel} H. Triebel,
\textit{Fractals and spectra},
Birkh\"auser, Basel, 1997.

\bibitem{Triebel2} ------,
\textit{Non-smooth atoms and pointwise multipliers in function spaces},
Ann. Mat. Pura Appl. (4) \textbf{182} (2003), 457--486.

\bibitem{Wadade} H. Wadade,
\textit {Quantitative estimates of embedding constants for Gagliardo-Nirenberg inequalities on critical Sobolev-Besov-Lorentz spaces}, J. Fourier Anal. Appl.
\textbf{19} (2013), 1029--1059.

\bibitem{Xiao} J. Xiao, Z. Zhai,
\textit{Fractional Sobolev, Moser-Trudinger, Morrey-Sobolev inequalities under Lorentz norms}, J. Math. Sci. \textbf{166} (2010), 357--376.





\end{thebibliography}

\end{document}